\documentclass[microtype]{gtpart}
\usepackage[parfill]{parskip}
\usepackage{graphicx} 
\usepackage{amsmath}
\usepackage{amstext}
\usepackage{amsfonts}
\usepackage{amsthm}
\usepackage{amssymb}
\usepackage{amscd}
\usepackage{subfig}
\usepackage{hyperref}
\usepackage[matrix]{xy}

\usepackage{mathrsfs}
\usepackage{mathtools}
\usepackage{comment}
\usepackage{tikz}
\usetikzlibrary{trees}
\usetikzlibrary{arrows}
\usetikzlibrary{matrix}
\usepackage{ifthen}

\setlength{\evensidemargin}{0.25in}
\setlength{\oddsidemargin}{0.25in}
\setlength{\textwidth}{6in}

\title{Nonf\hspace{0.02cm}illable Legendrian knots in the 3-sphere}
\author{\ \ \ Tolga Etg\"u} 
\address{Department of Mathematics, Ko\c{c} University, Sariyer, Istanbul 34450 Turkey \newline
Department of Mathematics, Princeton University, Princeton, NJ 08540 USA}
\email{tetgu@math.princeton.edu}\email{tetgu@ku.edu.tr}

\newtheorem{theorem}{Theorem}
\newtheorem{lemma}[theorem]{Lemma}
\newtheorem{proposition}[theorem]{Proposition}

\newtheorem{corollary}[theorem]{Corollary}

\newtheorem{remark}[theorem]{Remark}

\renewcommand{\d}{\partial} 

\newcommand{\K}{\mathbb{K}}

\newcommand{\ep}{\epsilon}

\newcommand{\Aff}{\mathcal{A}}
\newcommand{\Bff}{\mathcal{B}}
\newcommand{\Cff}{\mathcal{C}}
\newcommand{\Fff}{\mathcal{F}}

\begin{document}

\maketitle

\begin{abstract}
Let $\Lambda$ be a Legendrian knot in the standard contact 3-sphere. 
If $\Lambda$ bounds an orientable exact Lagrangian surface $\Sigma$ in the standard symplectic 4-ball, then the genus of $S$ is equal to the slice genus of (the smooth knot underlying) $\Lambda$, the rotation number of $\Lambda$ is zero as well as the sum of the Thurston-Bennequin number of $\Lambda$ and the Euler characteristic of $\Sigma$, and moreover the linearized contact homology of $\Lambda$ with respect to the augmentation induced by $\Sigma$ is isomorphic to the (singular) homology of $\Sigma$. 
It was asked in \cite{EHK} whether the converse of this statement holds. 
We give a negative answer to this question providing a family of Legendrian knots with augmentations which are not induced by exact Lagrangian fillings although the associated linearized contact homology is isomorphic to the homology of the smooth surface of minimal genus in the 4-ball bounding the knot.
\end{abstract}

\section{Introduction}

Let $\Lambda$ be a Legendrian knot in the standard contact 3-sphere. 
If $\Lambda$ bounds an  exact orientable Lagrangian surface $\Sigma$ in the standard symplectic 4-ball, then the genus of $\Sigma$ is equal to the slice genus of (the smooth knot underlying) $\Lambda$, the rotation number of $\Lambda$ is zero as well as the sum of the Thurston-Bennequin number of $\Lambda$ and the Euler characteristic of $\Sigma$ by a theorem of Chantraine \cite{cha}. Moreover the linearized contact homology of $\Lambda$ with respect to the augmentation induced by $\Sigma$ is isomorphic to the (singular) homology of $\Sigma$ by a theorem of Seidel \cite{E, D-R}. 
Question (8.9) in \cite{EHK} asks whether every augmentation for which Seidel's isomorphism holds is induced by a Lagrangian filling.
We give a negative answer to this question based on
the family of Legendrian knots
$$\{ \Lambda_{p,q,r,s} : p,q,r,s \geq 2,\  p \equiv q \equiv r+1 \equiv s+1 \mod 2 \}$$
given by the  Lagrangian  projection in Fig.~(\ref{lag}). 
Throughout the paper we will always be working under the above assumption that the parities of $p$ and $q$ match and they are the opposite of those of $r$ and $s$.
The rotation and Thurston-Bennequin numbers of $\Lambda_{p,q,r,s}$ are 0 and 5, respectively. 
This gives a lower bound of $3$ on the slice genus. 
On the other hand, the Seifert surface we obtain from this projection has genus $3$ hence both the slice genus and the Seifert genus are $3$.
\begin{theorem}
The Chekanov-Eliashberg dg-algebra of $ \Lambda_{p,p,p+1,p+1}$ admits an augmentation which is not induced by any exact orientable Lagrangian filling, although the corresponding linearized contact homology is isomorphic to the  homology of a surface of genus $3$ with one boundary component.
\end{theorem}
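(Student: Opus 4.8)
The plan is to make everything rest on the combinatorics of the Chekanov--Eliashberg dg-algebra and then to separate the \emph{geometric} augmentations from the merely \emph{algebraic} ones by an invariant finer than the rank of linearized contact homology. First I would read the dg-algebra $(\mathcal{A},\partial)$ of $\Lambda_{p,p,p+1,p+1}$ off the Lagrangian projection of Figure~(\ref{lag}): the generators are the Reeb chords (the crossings of the projection), the grading of each chord is computed from the rotation number of its capping path, and $\partial$ counts immersed polygons with a single positive corner. Restricting to the diagonal case $q=p$ and $r=s=p+1$ respects the symmetry of the diagram and keeps the combinatorics uniform in $p$, so $\partial$ can be recorded once as a formula in $p$.

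Next I would exhibit an explicit augmentation $\varepsilon\colon\mathcal{A}\to \mathbb{Z}/2$ (or over $\mathbb{Z}$ once the orientation signs are fixed) by solving $\varepsilon\circ\partial=0$ on the degree-$0$ generators; the symmetry of the diagonal knot should make a natural solution apparent. With $\varepsilon$ fixed I would linearize, write down the differential $\partial_1^{\varepsilon}$ on the vector space generated by the chords, and compute $LCH_*^{\varepsilon}(\Lambda)$. The aim of this step is to check that its Betti numbers are $1,6,0$ in the appropriate degrees, i.e.\ that $LCH_*^{\varepsilon}(\Lambda)\cong H_*(\Sigma)$ for a genus-$3$ surface $\Sigma$ with one boundary component. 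This is precisely Seidel's isomorphism, so at this stage $\varepsilon$ meets the hypothesis of Question~(8.9) of \cite{EHK}.

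The core of the argument is to show that no exact orientable filling induces $\varepsilon$. Chantraine's theorem already forces any such filling to have $\chi(\Sigma)=-\mathrm{tb}(\Lambda)=-5$, hence genus $3$, so the rank computation above cannot be used to obstruct. One also sees quickly that, with the grading $1,6,0$, every product and higher $A_\infty$-operation among the degree-$1$ classes of $LCH_*^{\varepsilon}(\Lambda)$ lands in degree $2$ and therefore vanishes for degree reasons; the obstruction cannot be internal to the linearized homology. I would therefore appeal to structure that the filling functor of Ekholm--Honda--K\'alm\'an equips a geometric augmentation with but which is not forced by the bare group isomorphism: either the Sabloff duality pairing compared against the Poincar\'e--Lefschetz intersection form on $H_1(\Sigma)=\mathbb{Z}^6$, or the compatibility of $\varepsilon$ with the Chekanov--Eliashberg algebra taken over the group ring that records homology classes of capping paths (the refinement through which geometric augmentations factor). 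The goal is to compute the chosen refinement directly from $(\mathcal{A},\partial)$ and exhibit an incompatibility that rules out every genus-$3$ filling at once.

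The main obstacle is exactly this last step: identifying which finer invariant is simultaneously computable from the combinatorial dg-algebra and provably preserved by the filling functor, and then carrying out that computation carefully enough to obstruct \emph{all} fillings rather than a single candidate. Establishing the rank isomorphism in the second step is laborious but routine once the diagram is fixed; by contrast the obstruction hinges on correctly identifying the extra geometric data and, if one works over $\mathbb{Z}$, on tracking the orientation signs without error, since the entire argument may turn on a single nonzero coefficient.
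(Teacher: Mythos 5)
Your first two steps (reading off the Chekanov--Eliashberg algebra, exhibiting $\ep$, and verifying Seidel's isomorphism) coincide with the paper's, but the core third step is a genuine gap --- one you flag yourself --- and the two candidate invariants you propose do not fill it. The Sabloff-duality-pairing candidate provably fails for this example: computing the product structure on the (unitalized) dual of the linearized complex, one finds $\mu^2_\Aff(b_i,b_{i\pm3})=\pm a_0$ and all other products of the degree-one classes zero, i.e.\ the pairing \emph{is} the standard symplectic intersection form of a genus-$3$ surface, so it is isomorphic to the Poincar\'e--Lefschetz form and obstructs nothing. Relatedly, your parenthetical claim that all products among the degree-one classes vanish for degree reasons is incorrect: in the cohomologically graded dual algebra $\Bff \cong \operatorname{RHom}_{CE^*}(\K,\K)$ the class $a_0$ sits in degree $2$ --- it plays the role of the fundamental class of the \emph{capped, closed} surface, not of the punctured one --- and it receives nontrivial products. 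The group-ring refinement is never computed in your outline, and there is no reason to expect it to obstruct either; the entire point of this example (as an answer to Question (8.9) of \cite{EHK}) is that the augmentation looks geometric at every such ``classical'' level.

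What actually does the work in the paper is finer than anything you name: the higher $A_\infty$-structure on the dual of the linearized complex. By the Ekholm--Lekili duality theorem \cite{EkL}, an exact filling forces an $A_\infty$ quasi-isomorphism between $\operatorname{RHom}_{CE^*}(\K,\K)$ and the cochain algebra $C^*(S;\K)$ of the closed surface $S$ obtained by capping the filling. The paper computes a minimal model $\Aff$ of this dual algebra by homological perturbation: $\mu^2_\Aff$ realizes the intersection form (hence no obstruction there), $\mu^3_\Aff$ vanishes, but certain quadruple products are nonzero, e.g.\ $\mu^4_\Aff(b_4,b_1,b_2,b_5)=a_0$ --- a Massey-product-type phenomenon invisible to the homology groups and to the pairing. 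Combined with a proof that $C^*(S;\K)$ is formal when $\operatorname{char}(\K)=2$, a bookkeeping argument over ten $A_\infty$-morphism equations shows that any $A_\infty$-morphism $\Aff \to H^*(S;\K)$ must kill $a_0$, hence cannot be a quasi-isomorphism; nonfillability follows. Without this duality theorem and the computation of the nontrivial $\mu^4$ products (or some equivalent higher-order invariant), your outline cannot be completed: every invariant you propose is blind to the distinction the example is designed to exhibit.
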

Our examples are inspired by a deformation argument in \cite{EL} related to the Chekanov-Eliashberg algebra where the Legendrian link of unknots linked according to the $D_4$ tree is shown to be significant, specifically over a base field of characteristic $2$ (see the last part of the proof of Thm.(14) in \cite{EL} and Rem.~(\ref{intr}) below). 
As can be seen in Fig.~(\ref{lag}), $\Lambda_{p,q,r,s}$ is constructed from the $D_4$ link by adding twists that turn it into a Legendrian knot. 
A similar construction was previously used in \cite{CEKSW} on a different link to produce infinite families of Legendrian knots not isotopic to their Legendrian mirrors.

In the next section we describe the Chekanov-Eliashberg algebra and compute the linearized contact homology of our examples. 
In Sec.~(\ref{dualizing}) we gather enough information on the strictly unital $A_\infty$-algebras obtained by dualizing the Chekanov-Eliashberg algebras of our Legendrian knots to prove that they are not quasi-isomorphic to the $A_\infty$-algebra of cochains on a closed surface whenever the base field has characteristic $2$.  This proves the nonfillability of our examples by a duality result of Ekholm and Lekili from their recent preprint \cite{EkL} (see Thm.~(\ref{ekhlek}) below).

{\bf Acknowledgments.} We would like to thank Yank\i \ Lekili and Joshua Sabloff for discussions on the subject and comments on a draft of this paper. It is a pleasure to thank Princeton University for the  hospitality. This research is partially supported by the Technological and Research Council of Turkey through a BIDEB-2219 fellowship.

\section{Linearized contact homology  of $\Lambda_{p,q,r,s}$}

The Chekanov-Eliashberg algebra of a Legendrian knot is a differential graded algebra generated by Reeb chords from the knot to itself.
We refer to \cite{chekanov} for the combinatorial description of the Chekanov-Eliashberg algebra of a Legendrian knot in the standard contact $3$-sphere. 

We denote the Chekanov-Eliashberg algebra of $\Lambda= \Lambda_{p,q,r,s}$ over a field $\K$ by $(L, \d )$. 
Since the rotation number of $\Lambda$ is $0$, we have a $\Z$-grading on $(L, \d )$. 

\begin{figure}[htb!]
\centering
\begin{tikzpicture}
	\draw (0,0) node[inner sep=0] {\includegraphics[scale=0.5]{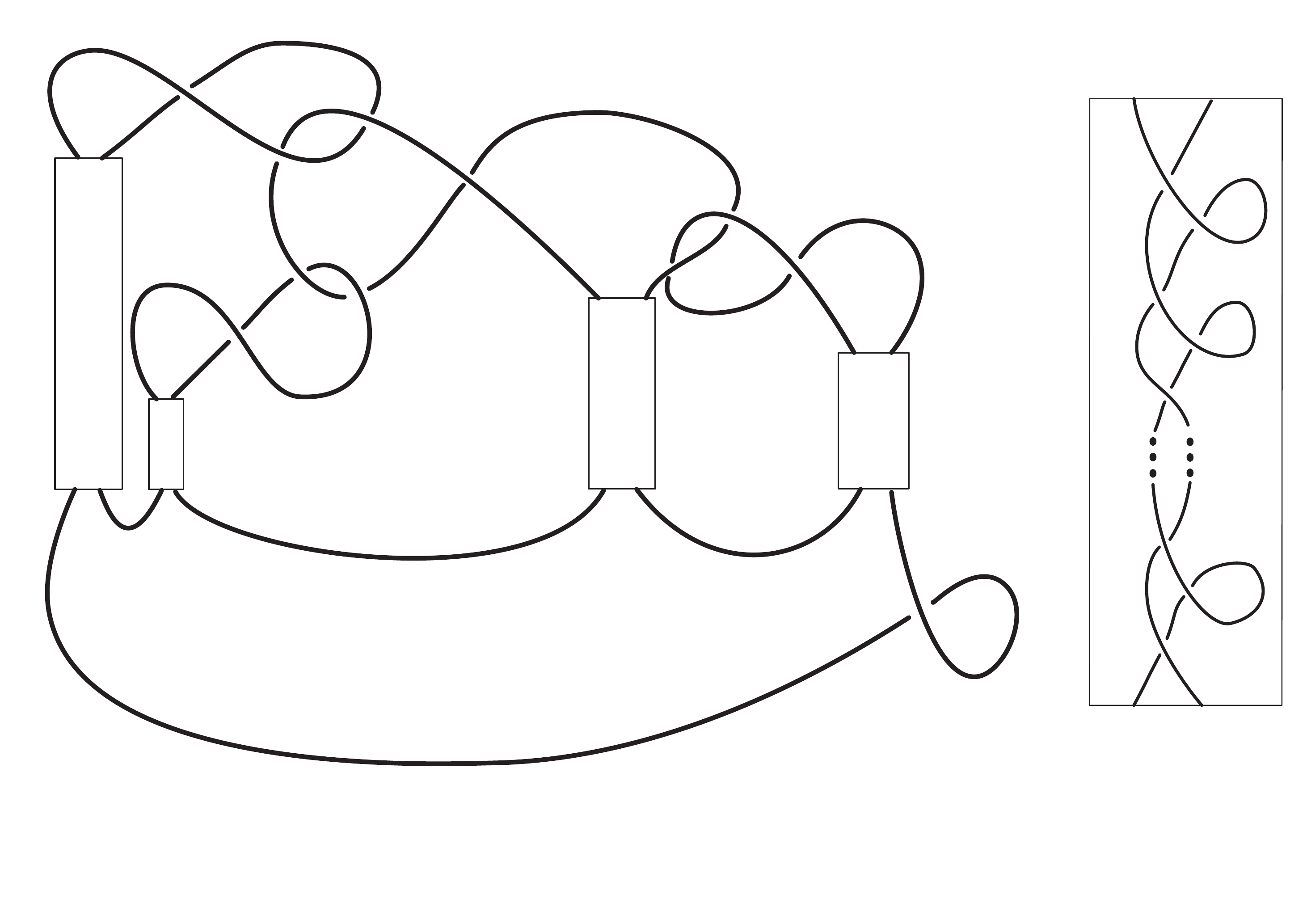}} ;
	\node[] at (-6.4,1.3)  {{{$\lambda^x_p$}}};
	\node[] at (-5.53,0.27)  {{{$\lambda^y_q$}}};
	\node[] at (-0.4,0.85)  {{{$\lambda^z_r$}}};
	\node[] at (2.5,0.65)  {{{$\lambda^w_s$}}};
	\node[] at (4.5,1.5)  {{\large{$\lambda^*_n$}:}};
	\node[] at (-5.3,4.5)  {{\tiny{$a^x_0$}}};
	\node[] at (-4.5,3.5)  {{\tiny{$b_1$}}};
	\node[] at (-2.9,4.0)  {{\tiny{$b_4$}}};
	\node[] at (-4.7,1.75)  {{\tiny{$a^y_0$}}};
	\node[] at (-4.25,2.2)  {{\tiny{$b_5$}}};
	\node[] at (-3.05,1.85)  {{\tiny{$b_2$}}};
	\node[] at (-2.15,3.45)  {{\tiny{$a^z_0$}}};
	\node[] at (0.0,2.25)  {{\tiny{$b_6$}}};
	\node[] at (1.15,2.82)  {{\tiny{$b_3$}}};
	\node[] at (1.55,2.5)  {{\tiny{$a^w_0$}}};
	\node[] at (3.15,-1.45)  {{\tiny{$a_0$}}};
	\node[] at (5.62,1.62+1.6)  {{\tiny{$*_0$}}};
	\node[] at (5.55,0.27+1.6)  {{\tiny{$*_1$}}};
	\node[] at (5.67,-0.88+1.6)  {{\tiny{$*_2$}}};
	\node[] at (5.47,-2.5+1.6)  {{\tiny{$*_{n-1}$}}};
	\node[] at (5.57,-3.75+1.6)  {{\tiny{$*_{n}$}}};
	\node[] at (6.43,1.12+1.62)  {{\tiny{$a^*_1$}}};
	\node[] at (6.42,-0.23+1.62)  {{\tiny{$a^*_2$}}};
	\node[] at (6.34,-3.12+1.6)  {{\tiny{$a^*_n$}}};
	\node[] at (-2.66,-3.47)  {{\Large{$\bullet$}}};
	
\end{tikzpicture}
\vspace{-1.6cm}
\caption{Lagrangian projection of $\Lambda_{p,q,r,s}$ for $p,q,r,s \geq 2$ and $p \equiv q \equiv r+1 \equiv s+1 \mod 2$}
\label{lag}
\end{figure}

The generators of $(L, \d )$ are as indicated in Fig.~(\ref{lag}):  
\begin{align*}
 a^x_0, \dots , a_p^x , \ a^y_0,  \dots , a_q^y, & \ a^z_0, \dots , a_r^z, \ a^w_0, \dots , a_s^w \\
x_0 , \dots , x_p, \  y_0 , \dots , y_q,  & \ z_0 , \dots , z_r, \ \ w_0 , \dots ,  w_s \\
a_0 , b_1, & \dots , b_6 
\end{align*}
with gradings
\begin{align*}
&  |*_i|=0, |a_0| =|a^*_i|=1 , \ \mbox{ for } \ *  \in \{ x,y,z,w \} \\
|b_1|=-|b_4| &= p-r+1, \ |b_2|=-|b_5| = q-r+1, \ |b_3|=-|b_6| = r-s 
\end{align*}
Let $\ep : L \to \K $ be the augmentation which maps all $*_i$ to $-1$ for $*  \in \{ x,y,z,w \}$.
\begin{proposition}\label{ce-dga}
There is an isomorphism 
$$HC_*^\ep (\Lambda_{p,p,p+1,p+1}) \cong  H_{1-*} (\Sigma ; \K)$$
between the linearized contact homology of $\Lambda_{p,p,p+1,p+1}$ with respect to the augmentation $\ep$ and the homology of the orientable surface $\Sigma$ of genus $3$ with one boundary component.
\end{proposition}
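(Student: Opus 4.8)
The plan is to reduce the assertion to the computation of the rank of a single linear map. Recall that given the augmentation $\ep$ one forms the linearized differential $\d_1^\ep$ by conjugating $\d$ by the algebra automorphism $a \mapsto a + \ep(a)$ and keeping the part that is linear in the generators; then $HC_*^\ep$ is the homology of the graded vector space spanned by the Reeb chords equipped with $\d_1^\ep$. For $\Lambda = \Lambda_{p,p,p+1,p+1}$ the gradings collapse: since $p-r+1 = q-r+1 = r-s = 0$, every $b_i$ has degree $0$, so all of the $*_i$ and $b_i$ lie in degree $0$ while all of the $a^*_i$ and $a_0$ lie in degree $1$, and there are no chords in any other degree. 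Consequently the linearized complex is supported in degrees $0$ and $1$ and is governed by the single map $\d_1^\ep \colon C_1 \to C_0$, with $HC_1^\ep = \ker \d_1^\ep$ and $HC_0^\ep = \operatorname{coker}\d_1^\ep$ and all other groups zero. As $H_*(\Sigma;\K)$ equals $\K$ in degree $0$, $\K^6$ in degree $1$, and $0$ otherwise, the statement is precisely that $\ker \d_1^\ep \cong \K$ and $\operatorname{coker}\d_1^\ep \cong \K^6$; the Euler characteristics already agree, since $\dim C_0 - \dim C_1 = 5 = \dim H_1(\Sigma) - \dim H_0(\Sigma)$.

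First I would read off $\d$ from the Lagrangian projection of Fig.~(\ref{lag}) by counting the immersed polygons with a single positive corner and the remaining corners negative at the Reeb chords. The projection consists of four copies of the twist gadget $\lambda^*_n$ glued to a central region that carries $a_0, b_1, \dots, b_6$ and realizes the $D_4$ linking pattern. Inside each gadget the disks express $\d a^*_i$ through the adjacent degree-$0$ generators $*_{i-1}$ and $*_i$ (together with the augmented corners), so that after substituting $\ep(*_j) = -1$ the linearized relations $\d_1^\ep a^*_i$ with $i \geq 1$ telescope along the arm exactly like the simplicial chain complex of a subdivided interval. The disks that traverse the central region supply the terms of $\d a^*_0$, $\d a_0$, and $\d b_i$ that couple the four arms. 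I would record all of these and then check directly that $\ep \circ \d = 0$, confirming that $\ep$ is a genuine augmentation.

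With $\d_1^\ep$ in hand the computation becomes linear algebra. I would first cancel the acyclic part of each arm: the telescoping relations let the $a^*_i$ with $i \geq 1$ eliminate all but one of $*_0, \dots, *_n$, so each gadget collapses to a single surviving degree-$0$ class $\bar *$ and its innermost degree-$1$ chord $a^*_0$. This leaves a reduced complex whose degree-$0$ part is spanned by $\bar x, \bar y, \bar z, \bar w, b_1, \dots, b_6$ and whose degree-$1$ part is spanned by $a^x_0, a^y_0, a^z_0, a^w_0, a_0$, still with $\dim C_0^{\mathrm{red}} - \dim C_1^{\mathrm{red}} = 5$. It then remains to show that the five images $\d_1^\ep a^x_0, \dots, \d_1^\ep a^w_0, \d_1^\ep a_0$ span a $4$-dimensional subspace of the $10$-dimensional degree-$0$ space, i.e. satisfy exactly one linear relation; this yields $\ker \d_1^\ep \cong \K$ and $\operatorname{coker}\d_1^\ep \cong \K^6$, matching $H_0(\Sigma)$ and $H_1(\Sigma)$ under the grading shift.

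The hard part will be getting the differential at the central $D_4$ vertex exactly right — both the polygon count through that region and the resulting monomials in $\d a^*_0$, $\d a_0$, and $\d b_i$, with their signs over an arbitrary field $\K$. This is where the $D_4$ structure and the parity hypothesis $p \equiv q \equiv r+1 \equiv s+1$ enter, and it is precisely what makes $\ep$ an augmentation and forces the single relation among the five central classes rather than a larger or smaller kernel. Once these central terms are pinned down, the arm-by-arm cancellation and the final rank count are routine.
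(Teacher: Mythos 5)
Your structural reduction is correct and, in outline, matches what the paper does: for $\Lambda_{p,p,p+1,p+1}$ the complex is concentrated in degrees $0$ and $1$, so the claim is exactly that $\ker\d_1^\ep\cong\K$ and $\operatorname{coker}\d_1^\ep\cong\K^6$; your Euler-characteristic check, the arm-by-arm cancellation down to $\langle \bar x,\bar y,\bar z,\bar w,b_1,\dots,b_6\rangle$ in degree $0$ and $\langle a^x_0,a^y_0,a^z_0,a^w_0,a_0\rangle$ in degree $1$, and the assertion that the five degree-$1$ images satisfy exactly one linear relation are all consistent with the paper's computation. (Indeed one finds $\d_1^\ep a_0=x_p+y_q+z_r+w_s$, whose class in the reduced complex is $(-1)^p(\bar x+\bar y)+(-1)^{p+1}(\bar z+\bar w)$, a combination of the images of the $a^*_0$, and there are no further relations.)

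However, there is a genuine gap, and you point at it yourself: the polygon count is never performed, and that count \emph{is} the content of the proposition; everything you do carry out is, as you say, routine linear algebra. The paper's proof consists essentially of establishing
$\d a_0 = 1 - w_sz_ry_qx_p$,
$\d a^x_0 = 1 + x_0 + b_1b_4$,
$\d a^y_0 = 1 + y_0 + b_2b_5$,
$\d a^w_0 = 1 + w_0 + b_3b_6$,
$\d a^z_0 = 1 + z_0 + b_4b_1 + b_5b_2 + z_0b_6b_3 + b_4b_1b_5b_2$,
$\d a^*_i = 1 - *_{i-1}*_i$ for $i\geq 1$, and $\d b_j = 0$,
and then linearizing. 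Without these formulas you cannot verify that $\ep$ is an augmentation (this uses $\ep(b_j)=0$ and the precise constant terms, e.g.\ $\ep(\d a^z_0)=1-1=0$), that the $b_j$ are $\d_1^\ep$-cycles (without which the cokernel need not be $6$-dimensional), or that the relation among the five degree-$1$ images exists and is unique; with different central terms each of these could fail, so no part of the conclusion is available until the disks through the $D_4$ region are counted. A methodological remark: where you propose a homological-perturbation cancellation of the arms, the paper instead applies the elementary DGA transformation $a_0 \mapsto a_0 -(-1)^{p}\left(\sum_i (-1)^i a_i^x + \sum_i (-1)^i a_i^y - \sum_i (-1)^i a_i^z - \sum_i (-1)^i a_i^w\right)$, after which $\d_1^\ep a_0=0$ on the nose and the homology is visibly spanned by $a_0,b_1,\dots,b_6$; the two routes are equivalent for this proposition, but the paper's choice also fixes the presentation reused for the $A_\infty$-computations in the following section.
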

\begin{proof}
Counting the relevant immersed polygons with the choice of a base-point on $\Lambda_{p,q,r,s}$ as indicated by {\Large{$\bullet$}} in Fig.~(\ref{lag}) we see that nontrivial differentials are given by
\begin{align*}
\d a_0 & = 1 - w_sz_ry_qx_p \\
\d a^x_0 & = 1 + x_0 + b_1b_4 \\
\d a^y_0 & = 1 + y_0 + b_2b_5 \\
\d a^z_0 & = 1 + z_0 + b_4b_1+ b_5b_2 + z_0 b_6b_3 +  b_4b_1 b_5b_2  \\
\d a^w_0 & = 1 + w_0 + b_3b_6 \\
\d a_i^* & = 1 - *_{i-1}*_i 
\mbox{ for }  \ *  \in \{ x,y,z,w \} \ \mbox{ and } i \geq 1
\end{align*} 
Conjugating $\d$ by the automorphism $id+\ep$ gives another differential $\d^\ep$ on $L$ 
\begin{align*}
\d^\ep a_0 & = w_s + z_r + y_q+ x_p - w_sz_r- w_sy_q - w_sx_p - z_ry_q - z_sx_p - y_qx_p \\ & + w_sz_ry_q+ w_sz_rx_p+w_sy_qx_p+z_ry_qx_p- w_sz_ry_qx_p \\
\d^\ep a^x_0 & =  x_0 + b_1b_4 , \ \ \d^\ep a^y_0 = y_0 + b_2b_5 , \ \ \d^\ep a^w_0  =  w_0 + b_3b_6 \\
\d^\ep a^z_0 & =z_0 + b_4b_1+ b_5b_2 - b_6b_3 + z_0 b_6b_3 +  b_4b_1 b_5b_2  \\
\d^\ep a_i^* & =  *_{i-1}+*_i-*_{i-1}*_i \ , \ 
\mbox{ for }  \ *  \in \{ x,y,z,w \} \ \mbox{ and } i \geq 1
\end{align*}

Applying the elementary transformation 
$$ a_0 \mapsto  a_0 -(-1)^{p} \left(\sum_{i=0}^p (-1)^i a_i^x +  \sum_{i=0}^q (-1)^i a_i^y
- \sum_{i=0}^r (-1)^i a_i^z- \sum_{i=0}^s (-1)^i a_i^w\right) $$
simplifies the computation of linearized contact homology $HC_*^\ep(\Lambda)$ of $\Lambda$ associated to the augmentation $\ep$ and more importantly, the description of the $A_\infty$-algebras that will be discussed in the next section. 

At this point, we have the following presentation of the differential $\d_1^\ep$ on the linearized complex which computes $HC_*^\ep(\Lambda_{p,q,r,s})$:
\begin{align*}
&\d_1^\ep a_0  = \d_1^\ep b_j  = 0  \\
\d_1^\ep a^x_0  =  x_0  , \ \
\d_1^\ep a^y_0 &= y_0,   \ \
\d_1^\ep a^z_0  =z_0,    \ \
\d_1^\ep a^w_0  =  w_0  \\
\d_1^\ep a_i^*  =  *_{i-1}+*_i & \ , \ 
\mbox{ for }  \ *  \in \{ x,y,z,w \} \ \mbox{ and } i \geq 1
\end{align*}
It is clear that $HC_*^\ep(\Lambda_{p,q,r,s})$ is spanned by $a_0 , b_1, \dots , b_6$. 
Moreover, if $p=q=r-1=s-1$, then $|b_i|=0$ for all $i$ and we get the graded isomorphism in the statement.
\end{proof}

\section{The augmentation $\ep$ is not induced by a Lagrangian filling}\label{dualizing}

In this section, we prove that $\Lambda_{p,q,r,s}$ has no exact Lagrangian filling associated to the augmentation $\ep$ by using a result from a recent preprint of Ekholm and Lekili. 

The following is a consequence of  \cite[Thm.(4)]{EkL}.
\begin{theorem}[Ekholm-Lekili]\label{ekhlek}
If $\Lambda$ has an exact Lagrangian filling $\Sigma$, then there is an $A_\infty$ quasi-isomorphism between $\operatorname{RHom}_{CE^*}(\K, \K)$ and the $A_\infty$-algebra $C^*(S;\K)$ of (singular) cochains on the closed surface $S$ obtained by capping the boundary of $\Sigma$, where $CE^*=L_{-*}$ and $\K$ is equipped with a $CE$-module structure by the augmentation $\ep_\Sigma : CE \to \K$ induced by the filling $\Sigma$.
\end{theorem}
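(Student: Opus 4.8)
The plan is to read off the statement as a specialization of the Koszul-duality theorem of Ekholm and Lekili, treating the Floer-theoretic machinery of \cite{EkL} as a black box. The object to understand is the derived endomorphism algebra $\operatorname{RHom}_{CE^*}(\K,\K)$ of the augmentation module, i.e. the Koszul dual of the Chekanov--Eliashberg algebra; the content of \cite[Thm.(4)]{EkL} is that, for a geometric augmentation, this purely algebraic dual is computed by a Floer invariant of the filling. First I would apply that theorem to the pair $(\Lambda,\Sigma)$: equipping $\K$ with the $CE^*$-module structure coming from the induced augmentation $\ep_\Sigma$, it yields an $A_\infty$ quasi-isomorphism between $\operatorname{RHom}_{CE^*}(\K,\K)$ and the relevant Floer invariant of $\Sigma$ (its self wrapped Floer cohomology), where $\Sigma$ is viewed as a non-compact exact Lagrangian in the completion of the symplectic ball.

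The remaining step is to identify this invariant of the filling with $C^*(S;\K)$. Here I would use that the symplectic ball has vanishing symplectic cohomology, so there is no interior contribution to the wrapping and the Floer complex of the exact filling reduces to a Morse/singular cochain model together with the boundary Reeb-chord contributions. Capping the single Legendrian boundary circle of $\Sigma$ with the standard Lagrangian disk produces the closed surface $S$, and the Floer $A_\infty$-structure gets matched with the $A_\infty$-structure of singular cochains $C^*(S;\K)$ (cup and Massey products). On cohomology this reproduces a closed genus-$3$ surface; in particular the top class $H^2(S)\cong\K$, which is absent from the surface-with-boundary computation of Proposition~\ref{ce-dga}, arises precisely from the wrapping.

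Throughout I would track gradings carefully: with the convention $CE^*=L_{-*}$ the homological degree on the Koszul dual must be normalized so that it matches cohomological degree on $S$, placing $\operatorname{RHom}_{CE^*}(\K,\K)$ in degrees $0,1,2$ with ranks $1,6,1$. The main obstacle --- and the reason the conclusion is a genuine $A_\infty$ quasi-isomorphism rather than a mere isomorphism of graded groups --- is the faithful transport of the full $A_\infty$-structure through both identifications: the underlying groups are already forced by Proposition~\ref{ce-dga} and Poincar\'e duality for $S$, but carrying the higher products $m_k$ from the Floer side to the Massey products on $C^*(S;\K)$ is exactly the substantive input of \cite[Thm.(4)]{EkL}. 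Since I am only deducing the statement, I would finally check that the hypotheses of that theorem apply verbatim here (orientable $\Sigma$, one boundary component, field coefficients) and that its naturality is compatible with $\ep_\Sigma$, and then read off the displayed quasi-isomorphism.
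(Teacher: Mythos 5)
Your proposal takes essentially the same route as the paper: the paper gives no independent proof of this statement, quoting it verbatim as a consequence of \cite[Thm.(4)]{EkL} with $\K$ made a module via the induced augmentation $\ep_\Sigma$, which is exactly your black-box strategy. Your additional gloss (wrapped Floer cohomology of the filling, vanishing of symplectic cohomology, and the genus-$3$/rank $(1,6,1)$ specifics, which belong to the paper's example $\Lambda_{p,p,p+1,p+1}$ rather than to this general statement) goes beyond anything the paper records and is not needed, since the substantive content in both cases is the Ekholm--Lekili theorem itself.
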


In order to describe the $A_{\infty}$-algebra $\operatorname{RHom}_{CE^*}(\K, \K)$ in the above statement, we  utilize the isomorphism between $\operatorname{RHom}_{CE^*}(\K, \K)$ and the linear dual $\Bff$ of the  Legendrian $A_\infty$-coalgebra defined in the more general setting of \cite{EkL}. 
In the current setup, the strictly unital $A_\infty$-algebra $\Bff$ can be obtained from the non-unital $A_\infty$-algebra on the linearized cochain complex defined in \cite{CEKSW} (which is also the endomorphism algebra of $\ep$ in the $Aug_-$ category of \cite{BC}) by adding a copy of $\K$ to make it unital (cf. \cite[Rem.(24)]{EkL}). 

The description of $\Bff\cong \operatorname{RHom}_{CE^*}(\K, \K)$ we provide is based on the presentation of $(L, \d^\ep)$ obtained at the end of the proof of Prop.~(\ref{ce-dga}).
We abuse the notation and denote the duals of the generators of $L$ by the generators themselves.
The nontrivial $A_\infty$-products on $\Bff $ (besides those dictated by strict unitality)  are
$$ \mu_{\Bff}^1 (*_i)  =a_i^* + a_{i+1}^* \ , \  \mbox{ for }  \ *  \in \{ x,y,z,w \} \ \mbox{ and } i \geq 0$$
$$\mu_{\Bff}^1 (x_p)  =a_p^x  \ , \ \mu_{\Bff}^1 (y_q)  =a_q^y  \ , \ \mu_{\Bff}^1 (z_r)  = a_r^z  \ , \ \mu_{\Bff}^1 (w_s)  =a_s^w $$
$$\mu_{\Bff}^2 (b_1,b_4) = (-1)^{p+1}a_0+a^x_0 \ , \  \mu_{\Bff}^2 (b_2,b_5) =(-1)^{p+1} a_0+a^y_0 \ , \  \mu_{\Bff}^2 (b_3,b_6) = (-1)^{p}a_0+a^w_0 $$
$$ \mu_{\Bff}^2 (b_4,b_1) = \mu_{\Bff}^2 (b_5,b_2) = (-1)^{p}a_0+a^z_0 \ , \  \mu_{\Bff}^2 (b_6,b_3) =(-1)^{p+1}a_0-a^z_0 $$
$$ \mu_{\Bff}^2 (x_{i-1}, x_i) = (-1)^{p+i}a_0 - a_i^x  ,  \ \ \mu_{\Bff}^2 (y_{i-1}, y_i) = (-1)^{p+i}a_0 - a_i^y $$
$$  \mu_{\Bff}^2 (z_{i-1}, z_i) = (-1)^{p+i+1}a_0 - a_i^z,  \ \ \mu_{\Bff}^2 (w_{i-1}, w_i) = (-1)^{p+i+1}a_0 - a_i^w    $$ 
$$ \mu_{\Bff}^2 (w_s,z_r) = \mu_{\Bff}^2 (w_s,y_q) = \mu_{\Bff}^2 (w_s,x_p) = \mu_{\Bff}^2 (z_r,y_q)  = \mu_{\Bff}^2 (z_r,x_p)  =\mu_{\Bff}^2 (y_q,x_p) = - a_0 $$
$$  \mu_{\Bff}^3 (z_0 , b_6,b_3) = a_0+a^z_0 $$
$$ \mu_{\Bff}^3 (w_s,z_r,y_q) = \mu_{\Bff}^3 (w_s,z_r,x_p) =  \mu_{\Bff}^3 (w_s,y_q,x_p) = \mu_{\Bff}^3 (z_r,y_q,x_p) = a_0 $$
$$ \mu_{\Bff}^4 (b_4,b_1,b_5,b_2) = a_0+a^z_0 \ , \ \mu_{\Bff}^4 (w_s,z_r,y_q,x_p) = - a_0 $$
and the gradings in $\Bff$ are 
$$|a_0| =|a^*_i|=2 , \ |*_i|=1 \mbox{ for }  \ *  \in \{ x,y,z,w \} \ \mbox{ and } i \geq 0$$
$$|b_1|=-|b_4| = p-r+2, \ |b_2|=-|b_5| = q-r+2, \ |b_3|=-|b_6| = r-s+1 $$
A homological perturbation argument, as suggested by Prop.~(1.12) and Rem.~(1.13) in \cite{Seidel}, provides a minimal model $(\Aff, \mu^\bullet_\Aff)$ quasi-isomorphic to $(\Bff, \mu^\bullet_\Bff)$. 
From the above description of the $A_\infty$-products we see the decomposition $\Bff=\Aff \oplus \Cff$, where
$\Aff$ is generated by 
$$\{ 1, a_0 , b_i : i=1,\dots , 6\}$$ 
and gives a minimal model for $\Bff$, whereas $\Cff$ is the subalgebra generated by the rest of the generators of $\Bff$ and acyclic with respect to the differential $\mu^1$.  
We choose a contracting homotopy $T^1: \Cff \to \Cff$ with $\mu^1_\Bff T^1 + T^1 \mu^1_\Bff = F^1G^1 - id$ as follows
$$ T^1 (a_i^x) = -x_i +x_{i+1}+ \cdots +(-1)^{p-i+1} x_p , \ T^1 (a_i^y) =- y_i +y_{i+1}+  \cdots + (-1)^{q-i+1}y_q  $$$$ T^1 (a_i^z) = -z_i +z_{i+1}+ \cdots +(-1)^{r-i+1} z_r , \ T^1 (a_i^w) = - w_i +w_{i+1}+ \cdots + (-1)^{s-i+1}w_s$$
This homotopy $T^1$ and Eqn.~(1.18) in \cite{Seidel} suffices to compute the $A_\infty$-products $\mu^\bullet_\Aff$ on $\Aff$. 
To begin with, the only nontrivial $\mu^2_\Aff$ products (besides those dictated by strict unitality) are
$$\mu_\Aff^2(b_1,b_4)=\mu_\Aff^2(b_2,b_5)=\mu_\Aff^2(b_6,b_3)=(-1)^{p+1}a_0$$
$$\mu_\Aff^2(b_4,b_1)=\mu_\Aff^2(b_5,b_2) = \mu_\Aff^2(b_3,b_6)=(-1)^{p}a_0$$
\newpage
Moreover, $\mu^3_\Aff$ vanishes since 
\begin{itemize}
\item $\mu^3_\Bff$ is trivial on $\Aff \otimes \Aff \otimes \Aff$, and 
\item $ \mu^2_\Bff$ vanishes on $im(F^2) \otimes \Aff \subset im(T^1) \otimes \Aff \subset \Cff \otimes \Aff$  and on $\Aff \otimes im(F^2) \subset \Aff \otimes   \Cff $
\end{itemize} 
where $F^2 = T^1 \circ \mu^2_\Bff$ \ .

Proceeding further, Eqn.~(1.18) in \cite{Seidel} for $d=4$ simplifies as
$$
\mu^4_\Aff (\alpha_4, \dots , \alpha_1) = G^1 \left(\mu^4_\Bff (\alpha_4, \dots , \alpha_1)+ \mu^3_\Bff \left(F^2 (\alpha_4, \alpha_3) , \alpha_2 , \alpha_1\right)+\mu^2_\Bff \left(F^2 (\alpha_4, a_3) , F^2 (\alpha_2 , \alpha_1)\right)\right)
$$
where $G^1 : \Bff \to \Aff$ is the projection. 

At this point, one immediately sees that all three summands on the right hand side of the above formula for $\mu^4_\Aff$ vanish on quadruples which are not of the form $(b_i,b_{i\pm3},b_j,b_{j\pm 3})$.  
In order to prove the key proposition below, it suffices to compute $\mu^4_\Aff(b_i,b_j,b_k,b_l)$, where $(b_i,b_j,b_k,b_l)$ is a cyclic permutation of  $(b_1,b_2,b_5,b_4)$,  $(b_1,b_5,b_2,b_4)$, or $(b_2,b_5,b_2,b_5)$.
To this end, it is straightforward, if tedious, to check that, depending on the parity of $p$, the following are the only nontrivial ones among these ten $\mu^4$ products:
$$  \mu^4_\Aff(b_4,b_1,b_2,b_5)=a_0, \ \mu^4_\Aff(b_5,b_2,b_4,b_1)=\mu^4_\Aff(b_5,b_2,b_5,b_2)=-a_0 $$
if $ p$ is even, and
$$  \mu^4_\Aff(b_4,b_1,b_2,b_5)=\mu^4_\Aff(b_4,b_1, b_5, b_2)=a_0, \ \mu^4_\Aff(b_2,b_5,b_2, b_5)=-a_0 $$
if $p$ is odd.

Besides the computations above, another important ingredient for the proof of the proposition below is the following formality statement.
\begin{lemma}\label{formal}
Over a field $\K$ of characteristic $2$, the algebra of cochains $C^*(S;\K)$ on a closed orientable surface $S$ is a formal differential graded algebra.
\end{lemma}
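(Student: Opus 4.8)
The plan is to reduce to the minimal $A_\infty$-model and show that all higher products vanish. Over a field, $C^*(S;\K)$ is formal exactly when its minimal model $(H^*(S;\K),\mu^\bullet)$, with $\mu^2$ the cup product and $\mu^1=0$, can be chosen with $\mu^k=0$ for all $k\geq 3$; we take this model strictly unital. The first step is a grading count. For $k\geq 3$ the reduced inputs lie in $H^1\oplus H^2$, so each of the $k$ inputs has degree at least $1$, while $\mu^k$ has degree $2-k$; hence the output has degree at least $2$. Since $H^{>2}(S;\K)=0$, the only higher products that can be nonzero are $\mu^k\colon (H^1)^{\otimes k}\to H^2$, and these land in the one-dimensional top group $H^2$. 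In other words, the sole potential obstructions to formality are the higher Massey products of degree-one classes.

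The second step uses Poincaré duality to kill these. I would equip the minimal model with a cyclic (symmetric Frobenius) structure for the nondegenerate pairing $\langle a,b\rangle=\int_S a\cup b$ of degree $-2$, so that $\langle\mu^k(x_1,\dots,x_k),x_0\rangle$ is cyclically symmetric in its $k+1$ arguments. By the grading count $\mu^k(x_1,\dots,x_k)\in H^2$ can pair nontrivially only against $x_0\in H^0=\K\cdot 1$; but cyclic symmetry rewrites $\langle\mu^k(x_1,\dots,x_k),1\rangle$ as $\langle\mu^k(1,x_1,\dots,x_{k-1}),x_k\rangle$, which vanishes by strict unitality because $k\geq 3$. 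As the pairing restricts to a perfect pairing $H^2\otimes H^0\to\K$, this forces $\mu^k=0$ for every $k\geq 3$, so the minimal model is $(H^*(S;\K),\cup)$ and $C^*(S;\K)$ is formal. It is worth stressing that one cannot argue by intrinsic formality: the cohomology algebra $H^*(S;\K)$ is not intrinsically formal for large genus (it carries nonformal $A_\infty$-structures), so it is genuinely the cyclic structure carried by \emph{cochains} that is doing the work.

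The main obstacle is therefore producing a strictly unital cyclic minimal model over a field of characteristic $2$. On cohomology the Poincaré pairing is visibly symmetric and nondegenerate, but on cochains the cup product is only cyclically symmetric up to homotopy, and transferring a strictly cyclic structure to the minimal model is delicate in positive characteristic because it typically requires averaging over cyclic groups. I would handle this exactly as the perturbation argument is handled earlier in the paper: build an explicit contracting homotopy onto $H^*(S;\K)$ that is self-adjoint with respect to $\langle\,\cdot\,,\cdot\,\rangle$, so that the homotopy transfer à la Seidel automatically outputs a cyclic minimal model. A chain-level homotopy-Frobenius structure on $C^*(S;\K)$ coming from the cup product and cap product with the fundamental class, of the type constructed by Tradler and Zeinalian for Poincaré duality spaces, provides such a self-adjoint splitting; alternatively one may first replace $C^*(S;\K)$ by a small finite-dimensional Poincaré-duality model of the surface. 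Here characteristic $2$ is harmless and even convenient, since all cyclic signs disappear, and the low formal dimension of a surface keeps the construction of the self-adjoint homotopy completely explicit.
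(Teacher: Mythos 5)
Your two reduction steps are sound and elegantly exploit the geometry: the degree count showing that for $k\geq 3$ only $\mu^k\colon (H^1)^{\otimes k}\to H^2$ can survive, and the observation that strict unitality plus strict cyclic symmetry with respect to the Poincar\'e pairing kills precisely these products, are both correct. (Your warning that intrinsic formality fails is confirmed by this very paper: the minimal $A_\infty$-algebra $\Aff$ of Section 3 is a nonformal strictly unital $A_\infty$-structure on the graded algebra $H^*(S;\K)$ for $S$ of genus $3$, and it is visibly non-cyclic, since $\langle \mu^4_\Aff(b_4,b_1,b_2,b_5),1\rangle\neq 0$ while cyclicity plus unitality would force this to vanish --- so cyclicity really is the working hypothesis.) The genuine gap is the step carrying all the weight: the existence of a \emph{strictly unital, strictly cyclic} minimal model of $C^*(S;\K)$ over a field of characteristic $2$. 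This is not a routine citation. The standard cyclic transfer theorems (Kontsevich--Soibelman and their descendants) are characteristic-$0$ results, precisely because both the construction and the strictification of inner products proceed by averaging over cyclic groups; Tradler--Zeinalian produce only a \emph{homotopy} Frobenius ($\infty$-inner-product) structure on the cochains of a Poincar\'e duality space, and upgrading a homotopy inner product to a strictly cyclic quasi-isomorphic model is again a characteristic-$0$ strictification theorem (Cho). Your self-adjoint-homotopy route requires a chain-level nondegenerate invariant pairing, i.e.\ a dg Frobenius algebra model of $S$; simplicial cochains do not provide one (the pairing $\int_S a\cup b$ is highly degenerate at cochain level), and no finite-dimensional Poincar\'e duality model theorem is available in characteristic $2$ (Lambrechts--Stanley is a char-$0$, commutative result). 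Worse, for a surface the natural candidate for such a small Frobenius model is $H^*(S;\K)$ itself with zero differential, so this fallback presupposes the formality you are trying to prove; as it stands your argument establishes only the implication ``strictly unital cyclic minimal model exists $\Rightarrow$ formal''.

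The paper's proof is entirely different in character and sidesteps all transfer machinery: it fixes an explicit triangulation of $S$, writes out the simplicial cochain dg-algebra $C$ with its cup product, and constructs an explicit zig-zag of dg-algebra quasi-isomorphisms $C \leftarrow C' \hookrightarrow \widehat{C} \leftarrow H^*(S;\K)$, consisting of a change of generators, a stabilization by acyclic pairs $(\ep_k,\zeta_k)$, and finally an embedding of the cohomology algebra. This is elementary, self-contained, works verbatim in characteristic $2$, and (as remarked after the lemma) extends to arbitrary characteristic by inserting signs. If you want to rescue your approach, the concrete missing ingredient you must supply is an explicit chain-level dg Frobenius algebra model of the surface over $\K$ (or an explicit strictly invariant pairing together with a self-adjoint contracting homotopy); until then the characteristic-$2$ case of your key lemma is exactly as hard as the formality statement itself.
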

Over the base field $\K=\R$, there is the classical (and much more general) formality result in \cite{DGMS}.
Since we were not able to locate an extension of this result to nonzero characteristic cases in the literature, we provide a proof of Lem.~(\ref{formal})  at the end of this section.
In fact, the characteristic condition in the statement of Lem.~(\ref{formal}) can be removed by a straightforward modification of the proof.

\begin{proposition}\label{noqiso}
If the characteristic of the base field $\K$ is $2$, then the $A_\infty$-algebra $\Aff$ is not $A_\infty$ quasi-isomorphic to the algebra of cochains on a closed orientable surface $S$.
\end{proposition}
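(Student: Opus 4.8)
The plan is to argue by contradiction, combining the formality statement of Lemma~(\ref{formal}) with the explicit higher product $\mu^4_\Aff$ computed above. Suppose $\Aff$ were $A_\infty$ quasi-isomorphic to $C^*(S;\K)$ for some closed orientable surface $S$. Since $\operatorname{char}\K=2$, Lemma~(\ref{formal}) makes $C^*(S;\K)$ formal, hence quasi-isomorphic to its cohomology ring $\mathcal{H}:=H^*(S;\K)$ endowed with the trivial higher products $\mu^{\geq 3}_{\mathcal H}=0$. Composing, $\Aff$ would be quasi-isomorphic to the minimal $A_\infty$-algebra $\mathcal H$; as $\Aff$ is itself minimal, any such quasi-isomorphism is an $A_\infty$-\emph{isomorphism} $F=(F^1,F^2,\dots)\colon\Aff\to\mathcal H$ with $F^1$ a linear isomorphism. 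The proposition follows once this is shown to be impossible.

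First I would match the low-order structure. The morphism relation at word length $2$ forces $F^1$ to be an isomorphism of graded algebras from the $\mu^2_\Aff$-cohomology ring of $\Aff$ onto the cup-product ring of $S$. From the computation of $\mu^2_\Aff$, the only nontrivial products pair the three hyperbolic pairs $(b_1,b_4)$, $(b_2,b_5)$, $(b_3,b_6)$ into $\pm a_0$; thus the cohomology ring of $\Aff$ is precisely the intersection ring of a genus-$3$ surface, with $a_0$ the fundamental class. This pins $S$ to genus $3$ and sends $F^1(a_0)$ to the top class, so $F^1(a_0)\neq 0$. I would also record the characteristic-$2$ input that drives the contradiction: for an orientable closed surface $w_1(S)=0$, so by Wu's formula $\xi\cup\xi=0$ for every $\xi\in H^1(S;\K)$, i.e.\ the intersection form is alternating even over $\K$.

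The heart of the argument is the $A_\infty$-morphism relation at word length $4$. With $\mu^1=0$, $\mu^3_\Aff=0$ and $\mu^{\geq 3}_{\mathcal H}=0$, it collapses to an identity expressing $F^1\mu^4_\Aff$ through $F^2,F^3$ and the single product $\mu^2_{\mathcal H}$:
\begin{align*}
F^1\mu^4_\Aff(x_4,x_3,x_2,x_1)
&= \mu^2_{\mathcal H}\bigl(F^2(x_4,x_3),F^2(x_2,x_1)\bigr)
+ \mu^2_{\mathcal H}\bigl(F^1(x_4),F^3(x_3,x_2,x_1)\bigr) \\
&\quad + \mu^2_{\mathcal H}\bigl(F^3(x_4,x_3,x_2),F^1(x_1)\bigr)
- F^3\bigl(\mu^2_\Aff(x_4,x_3),x_2,x_1\bigr) - \cdots
\end{align*}
I would evaluate this on the diagonal tuple $(b_5,b_2,b_5,b_2)$ when $p$ is even, and on $(b_2,b_5,b_2,b_5)$ when $p$ is odd; in either case the left-hand side equals $F^1(a_0)\neq 0$ by the $\mu^4_\Aff$-computation (the signs being irrelevant in characteristic $2$). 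The leading ``gauge'' term $\mu^2_{\mathcal H}\bigl(F^2(b_5,b_2),F^2(b_5,b_2)\bigr)$ is a self-cup-square $\xi\cup\xi$ with $\xi\in\mathcal H^1$, hence vanishes by the recorded characteristic-$2$ fact. The remaining terms are the $F^3$-contributions together with $F^3$ applied to $\mu^2_\Aff(b_5,b_2)=\pm a_0$; the claim is that, over $\K$ of characteristic $2$ and using the alternating surface ring, their total contribution to the $a_0$-coefficient is forced to cancel, reducing the identity to $F^1(a_0)=0$, a contradiction.

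The main obstacle is exactly the control of these $F^3$-terms: one must show that no choice of $F^2,F^3$ can reproduce $F^1(a_0)$ on the diagonal input, i.e.\ that $\mu^4_\Aff$ represents a nonzero class in the relevant obstruction space ($HH^{2}$ of the genus-$3$ surface ring) rather than a coboundary removable by a change of $A_\infty$-isomorphism. I note the subtlety that $\mu^2_\Aff(b_5,b_2)=\pm a_0\neq 0$, so this is a genuine higher product and not a literal Massey product with vanishing sub-products; the argument is therefore obstruction-theoretic and must be carried out by direct evaluation. I expect to complete it by systematically eliminating every competing term over characteristic $2$ — each is either a self-cup-square $\xi\cup\xi=0$ or cancels against its transpose only modulo $2$ — leaving the nonzero diagonal value $\mu^4_\Aff=a_0$ as an irremovable obstruction to formality. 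This is precisely where both the characteristic-$2$ hypothesis and the alternating intersection form are indispensable, and it is the step I would write out in full detail; granting it, the desired non-quasi-isomorphism follows.
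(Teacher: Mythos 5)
Your reduction is set up correctly --- invoking Lemma~(\ref{formal}) to replace $C^*(S;\K)$ by $H^*(S;\K)$, writing the word-length-$4$ $A_\infty$-morphism equation (legitimately simplified using $\mu^1_\Aff=0$, $\mu^3_\Aff=0$ and minimality of $H^*(S;\K)$), noting that the left-hand side on a diagonal tuple is $\Fff^1(a_0)$, and killing the term $\Fff^2(b_5,b_2)\cup\Fff^2(b_5,b_2)$ as a self-cup-square of an $H^1$-class on an orientable surface in characteristic $2$. But the step you defer --- ``the $F^3$-contributions are forced to cancel'' within the single equation $Eq_{(2,5,2,5)}$ (or $Eq_{(5,2,5,2)}$) --- is not just unproven; it is false. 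After your cancellations, that one equation reads
\begin{align*}
\Fff^1(a_0) \ &= \ \Fff^3(b_2,b_5,b_2)\cup\Fff^1(b_5) \ + \ \Fff^1(b_2)\cup\Fff^3(b_5,b_2,b_5) \\
&\quad + \ \Fff^3(a_0,b_2,b_5) \ + \ \Fff^3(b_2,a_0,b_5) \ + \ \Fff^3(b_2,b_5,a_0),
\end{align*}
and $\Fff^3$ is essentially unconstrained at this stage (the word-length-$3$ equation constrains only $\Fff^1,\Fff^2$, since $\mu^3_\Aff=0$). In particular one may simply choose $\Fff^3(a_0,b_2,b_5)$ to be a representative of the fundamental class and satisfy this single equation with $\Fff^1(a_0)\neq 0$; no contradiction can be extracted from one equation alone. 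The terms $\Fff^3(\cdot,a_0,\cdot)$ really are there and really are nonzero in general --- this is exactly the content of Remark~(\ref{intr}), where their sum survives with coefficient $2$ when $\operatorname{char}\K\neq 2$.

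The missing idea is combinatorial, not obstruction-theoretic in the $HH^2$ sense you gesture at: the paper sums the equations $Eq_{(i,j,k,l)}$ over \emph{all ten} cyclic permutations of $(1,2,5,4)$, $(1,5,2,4)$ and $(2,5,2,5)$. Under this summation the uncontrollable terms cancel in pairs mod $2$: by graded commutativity of the cup product, each term $\Fff^3(b_i,b_j,b_k)\cup\Fff^1(b_l)$ in $Eq_{(i,j,k,l)}$ reappears as $\Fff^1(b_l)\cup\Fff^3(b_i,b_j,b_k)$ in $Eq_{(l,i,j,k)}$; each $\Fff^2\cup\Fff^2$ term pairs with its transpose in $Eq_{(k,l,i,j)}$ or is a vanishing self-square; and each term of the form $\Fff^3(\ldots,\mu^2_\Aff(\cdot,\cdot),\ldots)$ occurs in exactly two of the ten equations (using $\mu^2_\Aff(b_4,b_1)=\mu^2_\Aff(b_2,b_5)=\mu^2_\Aff(b_5,b_2)=a_0$ in characteristic $2$). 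Meanwhile the ten left-hand sides sum to $(-1)^{p+1}\Fff^1(a_0)$ by the $\mu^4_\Aff$-computation, forcing $\Fff^1(a_0)=0$ for \emph{every} $A_\infty$-homomorphism $\Fff$, hence no quasi-isomorphism exists. Your single-equation strategy cannot be repaired without this (or an equivalent) averaging over cyclic permutations, so as written the proof has a genuine gap at its central step.
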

\begin{proof}

By Lem.~(\ref{formal}), it suffices to prove that there is no $A_\infty$ quasi-isomorphism between $\Aff$ and $H^*(S;\K)$.
Suppose that $\Fff$ is an $A_\infty$-algebra homomorphism from $\Aff$ to  $H^*(S; \K)$. 
Since $\Aff$ is a minimal $A_\infty$-algebra, $\mu^1_\Aff$ is trivial. 
We have also established above that $\mu^3_\Aff$ vanishes as well. 
As a consequence, a particular set of $A_\infty$-functor equations, satisfied by the family of graded multilinear maps $\Fff^d : \Aff^{\otimes d} \to H^*(S;\K)[1-d]$, simplifies to 
\begin{align*}
\Fff^1  \left( \mu^4_\Aff (b_i, b_j, b_k,b_l) \right)   
&=  \Fff^3 (b_i, b_j, b_k) \cup \Fff^1(b_l) +  \Fff^1(b_i) \cup \Fff^3 ( b_j, b_k, b_l) +  \Fff^2(b_i, b_j) \cup  \Fff^2 (  b_k, b_l)\\
&
 + \Fff^3  \left(\mu_\Aff^2 (b_i, b_j) , b_k , b_l\right) + \Fff^3\left(b_i, \mu_\Aff^2 (b_j , b_k ), b_l\right) 
+ \Fff^3\left(b_i, b_j , \mu_\Aff^2 (b_k , b_l)\right) 
\end{align*}

In the rest of the proof we refer to the above equation as $Eq_{(i,j,k,l)}$
and consider the sum of all the equations $Eq_{(i,j,k,l)}$ where $(i,j,k,l)$ is a cyclic permutation of $(1,2,5,4)$, $(1,5,2,4)$ or $(2,5,2,5)$. 

First of all, the computation preceding this proposition implies that the sum of the left hand side of these ten equations is equal to $(-1)^{p+1} \Fff^1(a_0)$.
In contrast, the right hand side of the sum of these ten equations is $0$. 
Once we establish this, we get $\Fff^1(a_0)=0$ and hence $\Fff$ is not a quasi-isomorphism.

To prove the vanishing of the right hand side we consider the terms on the right hand side in three separate groups and argue that each group adds up to $0$ under the assumption that $char(\K)=2$. First observe that, since the cup product $\cup$ is (graded-)commutative, each of the first two terms on the right hand side of the equation $Eq_{(i,j,k,l)}$ appears in exactly one other equation, namely $Eq_{(l,i,j,k)}$ or $Eq_{(j,k,l,i)}$ . 
For the same reason, the third term on the right hand side of $Eq_{(i,j,k,l)}$ is cancelled by that of $Eq_{(k,l, i,j)}$, unless of course $(i,j)=(k,l)$. 
This leaves us with the sum of the third terms of $Eq_{(2,5, 2,5)}$ and $Eq_{(5, 2,5,2)}$,
$$\Fff^2(b_2, b_5) \cup \Fff^2 (  b_2, b_5) + \Fff^2(b_5, b_2) \cup \Fff^2 (  b_5, b_2)$$
which vanish by the general properties of the cup product.
Finally, remember that we always have $\mu^2_\Aff  (b_i , b_j)=0$ for $|i-j|\neq 3$ and, if $char(\K)=2$, 
$$\mu^2_\Aff  (b_4 , b_1) = \mu^2_\Aff  (b_2 , b_5) = \mu^2_\Aff  (b_5 , b_2)=a_0 \ . $$
This suffices to conclude that each of the last three terms on the right hand side of any one of the equations is either 0 or it appears in exactly two of our equations, e.g. the fifth term in $Eq_{(2,4,1,5)}$ is equal to the fifth term in $Eq_{(2,5,2,5)}$ \ .
\end{proof}

\begin{corollary}\label{nofill}
The Legendrian knot $\Lambda_{p,q,r,s}$ admits an augmentation which is not induced by an exact orientable Lagrangian filling.
\end{corollary}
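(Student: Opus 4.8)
The plan is to obtain the corollary as the contrapositive of Thm.~(\ref{ekhlek}), combined with the non-quasi-isomorphism established in Prop.~(\ref{noqiso}). First I would fix the base field $\K$ to have characteristic $2$; this is harmless, since the augmentation $\ep$ is defined over an arbitrary field and the statement only asserts the existence of \emph{some} non-fillable augmentation. I then take $\ep$ itself as the augmentation in question.

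Next I argue by contradiction. Suppose $\ep$ is induced by an exact orientable Lagrangian filling $\Sigma$ of $\Lambda_{p,q,r,s}$; that is, $\ep=\ep_\Sigma$ in the notation of Thm.~(\ref{ekhlek}). Capping off the single boundary component of $\Sigma$ produces a closed orientable surface $S$, and Thm.~(\ref{ekhlek}) then yields an $A_\infty$ quasi-isomorphism between $\operatorname{RHom}_{CE^*}(\K,\K)$, with $\K$ given the $CE$-module structure via $\ep$, and the cochain algebra $C^*(S;\K)$.

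The key reduction is to replace $\operatorname{RHom}_{CE^*}(\K,\K)$ by the small model $\Aff$. Here I would invoke the identification $\Bff\cong\operatorname{RHom}_{CE^*}(\K,\K)$ recorded at the start of Sec.~(\ref{dualizing}), together with the minimal model $\Aff\simeq\Bff$ produced there by homological perturbation. Composing quasi-isomorphisms, we conclude that $\Aff$ is $A_\infty$ quasi-isomorphic to $C^*(S;\K)$ for a closed orientable surface $S$. This directly contradicts Prop.~(\ref{noqiso}), which rules out exactly such a quasi-isomorphism when $char(\K)=2$. The contradiction shows that no such filling $\Sigma$ exists, which is the assertion of the corollary.

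I expect no substantive obstacle at this stage: the corollary is a formal assembly of results already in hand, and all the genuine difficulty has been discharged in the $A_\infty$-product computations for $\Aff$ and in Prop.~(\ref{noqiso}). The only points requiring mild care are bookkeeping the directions of the quasi-isomorphisms and confirming that passing to the minimal model $\Aff$ preserves the $A_\infty$ quasi-isomorphism type, so that the obstruction detected in $\Aff$ genuinely obstructs a filling of $\Lambda_{p,q,r,s}$. I would also note that, since Prop.~(\ref{noqiso}) excludes quasi-isomorphism to the cochains on \emph{every} closed orientable surface, one need not even identify the genus of $S$; nevertheless it is reassuring that the genus-$3$ computation of Prop.~(\ref{ce-dga}) and Chantraine's constraints are consistent with a hypothetical genus-$3$ filling, which is precisely why $\ep$ passes the linearized test of Prop.~(\ref{ce-dga}) yet fails the finer $A_\infty$ test of Prop.~(\ref{noqiso}).
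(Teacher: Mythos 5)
Your proof is correct and follows exactly the route the paper intends: the corollary is stated without a separate proof precisely because it is the formal assembly of Thm.~(\ref{ekhlek}), the identification $\Bff\cong\operatorname{RHom}_{CE^*}(\K,\K)$, the minimal model $\Aff\simeq\Bff$, and Prop.~(\ref{noqiso}) over a field of characteristic $2$. Your bookkeeping of the quasi-isomorphisms and the choice of $\ep$ over a characteristic-$2$ field match the paper's argument.
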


\begin{remark}\label{intr}
When $char (\K)\neq2$ our proof of Prop.~(\ref{noqiso}) breaks down because the right hand side of the sum of the ten equations we consider in the last step of the proof is equal to
$$ 2 (-1)^p \left( \Fff^3  (a_0 , b_2 , b_5) + \Fff^3(b_2, a_0, b_5) + \Fff^3(b_2, b_5 , a_0)\right)$$
which is not necessarily $0$ in general.
\end{remark}

\begin{proof} \emph{(of Lem.~(\ref{formal}))}
We prove the formality of the differential graded algebra $C=C^*(S, \K)$ of (simplicial) cochains with the cup product on the closed surface $S$ associated to the triangulation given in Fig.~(\ref{simplicial}) by providing a zig-zag of explicit dg-algebra quasi-isomorphisms connecting $C$ and the cohomology algebra $H=H^*(S, \K)$ of $S$. 

\begin{figure}[htb!]
\centering
\begin{tikzpicture}
	\draw (0,0) node[inner sep=0] {\includegraphics[scale=0.3]{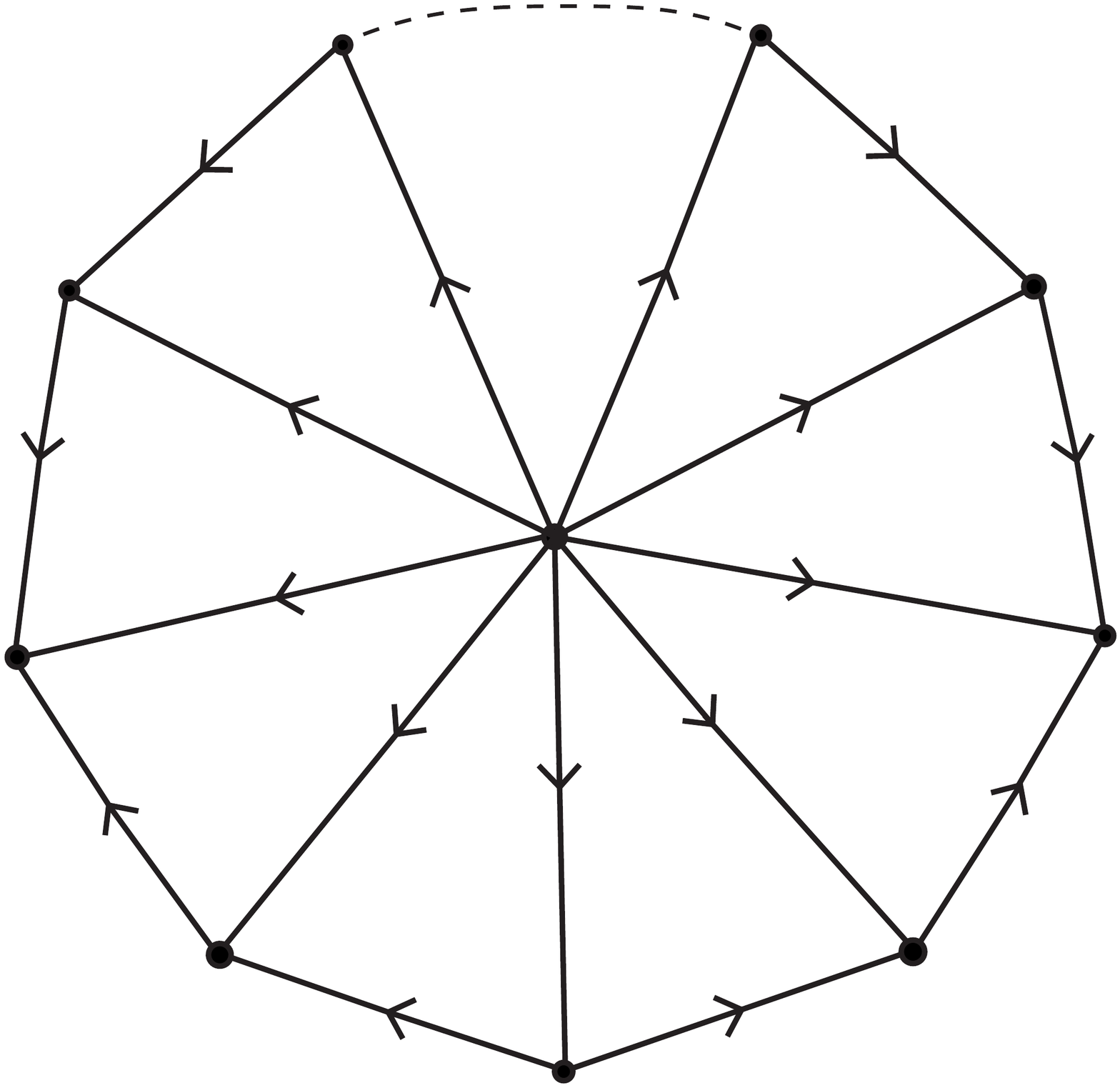}} ;
	
	\node[] at (0,1.05)  {{\small{$e^0_1$}}};
	\node[] at (0,-2.88)  {{\small{$e^0_2$}}};
	\node[] at (3,-1.2)  {{\small{$b_1$}}};
	\node[] at (2,3)  {{\small{$b_1$}}};
	\node[] at (3.3,1.1)  {{\small{$a_1$}}};
	\node[] at (1.1,-2.57)  {{\small{$a_1$}}};
	\node[] at (-1.1,-2.57)  {{\small{$b_g$}}};
	\node[] at (-3,-1.2)  {{\small{$a_g$}}};
	\node[] at (-3.3,1.1)  {{\small{$b_g$}}};
	\node[] at (-2,3)  {{\small{$a_g$}}};
	\node[] at (0.7,-1.2)  {{\small{$e^2_1$}}};
	\node[] at (-0.7,-1.2)  {{\small{$e^2_{4g}$}}};
	\node[] at (0.05,-1.75)  {{\small{$e^1_1$}}};
	\node[] at (1.7,-0.4)  {{\small{$e^2_2$}}};
	\node[] at (-1.7,-0.4)  {{\small{$e^2_{4g-1}$}}};
	\node[] at (1.5,-1.4)  {{\small{$e^1_2$}}};
	\node[] at (-1.5,-1.4)  {{\small{$e^1_{4g}$}}};

\end{tikzpicture}
\vspace{-1.2cm}
\caption{A triangulation of a closed, orientable surface $S$ of genus $g$}
\label{simplicial}
\end{figure}

We denote the generators of $C$ by
$$e_i, \alpha_j, \beta_j, \theta_k, \gamma_k \ \ \ \mbox{for } 1\leq j \leq g , \ 1\leq k \leq 4g $$
which represent the duals of the simplicies
$$e^0_i, a_j, b_j , e^1_k, e^2_k \, $$
as indicated in Fig.~(\ref{simplicial}). 
The nontrivial differentials and products can be read from the triangulation as
$$\d e_1  = \d e_2 = \theta_1 + \cdots + \theta_{4g} $$
$$ \d \alpha_j  = \gamma_{4j-3}+\gamma_{4j-1} , \ \ \d \beta_j  = \gamma_{4j-2}+\gamma_{4j} , \ \ 
\d \theta_k  = \gamma_{k-1} +\gamma_k $$
and
$$e_ie_i=e_i, \ \ e_1\theta_k = \theta_k , \ \ e_1\gamma_k = \gamma_k, \ \ \theta_k e_2 = \theta_k , \ \ \gamma_k e_2 = \gamma_k$$
$$ e_2 \alpha_j = \alpha_j e_2 = \alpha_j , \  \ e_2 \beta_j = \beta_j e_2 = \beta_j$$
$$\theta_{4j-3}\alpha_j=\gamma_{4j-3} , \ \ \theta_{4j-2}\beta_j=\gamma_{4j-2}$$
$$\theta_{4j}\alpha_j=\gamma_{4j-1} , \ \ \theta_{4j+1}\beta_j=\gamma_{4j}$$
(In the above equations and the rest of the proof, indices should always be interpreted modulo $4g$.)

We now define another dg-algebra, quasi-isomorphic to $C$ and with a simplified differential so that the rest of the proof is more transparent. 
This new dg-algebra $C'$ is generated by 
$$e,  \varphi_j, \psi_j, \nu, \ep_1, \zeta_1, \xi_l,  \nu_l   \ \ \ \mbox{for } 1\leq j \leq g , \ 1 \leq l \leq 4g-1$$ 
so that the map $\Phi : C' \to C$
defined by 
\begin{align*}
\Phi: \ \ \ \ \ & e \mapsto e_1 + e_2  ,  \ \ \ \ep_1 \mapsto e_1, \ \ \ \zeta_1 \mapsto \theta_1 + \cdots \theta_{4g} \\
&  \varphi_j \mapsto \alpha_j + \theta_{4j-2} + \theta_{4j-1},  \  \ \ \psi_j \mapsto \beta_j + \theta_{4j-1} +\theta_{4j} , \\ & \xi_l \mapsto \theta_l  ,  \ \  \nu_l \mapsto \gamma_{l-1} + \gamma_{l}, \ \ \ \nu \mapsto \gamma_{4g}
\end{align*}
is a dg-algebra quasi-isomorphism.
More precisely, on $C'$, the nontrivial differentials are 
$$\d' \ep_1 = \zeta_1, \ \ \d' \xi_l = \nu_l $$
$e$ is the identity element, and the remaining products are 
\begin{align*}
\varphi_j \psi_j = \nu + \nu_1 + \cdots + \nu_{4j-2} , \ & \ \psi_j \varphi_j = \nu + \nu_1 + \cdots + \nu_{4j-1}\\
\xi_{4j-3}\varphi_j=\nu + \nu_1 + \cdots + \nu_{4j-3} ,  \ & \ \xi_{4j-2}\psi_j=\nu + \nu_1 + \cdots + \nu_{4j-2}\\
\mbox{ for } j<g, \ \ \xi_{4j}\varphi_j=\nu + \nu_1 + \cdots + \nu_{4j-1},  \ & \   \xi_{4j+1}\psi_j=\nu + \nu_1 + \cdots + \nu_{4j} ,  \   \ \xi_{1}\psi_g=\nu\\
\ep_1\ep_1=\ep_1, \ \ \ep_1\xi_l = \xi_l , \ & \ \ep_1\varphi_j = \xi_{4j-2} +\xi_{4j-1}\\ 
 \mbox{ for } j<g,  \ \ \ep_1\psi_j = \xi_{4j-1} +\xi_{4j}, \ &  \ \ep_1\psi_g= \zeta_1 + \xi_1 + \cdots \xi_{4g-2}, \  \ \ep_1\nu = \nu \\
\ \ \ep_1\zeta_1 = \zeta_1, \ \ \ep_1\nu_l = \nu_l,\ & \  \zeta_1\varphi_j= \nu_{4j-2} + \nu_{4j-1} \\
\mbox{ for } j<g,  \ \ \zeta_1\psi_j=\nu_{4j-1}+\nu_{4j} , \ & \    \zeta_1\psi_g= \nu_1 + \cdots + \nu_{4g-2}
\end{align*}

In the next step, we define yet another dg-algebra $\widehat{C}$ by stabilizing $C'$, i.e. $\widehat{C}$ contains $C'$ as a subalgebra and the inclusion map is a dg-algebra quasi-isomorphism. 
Namely, we add the generators 
$$ \ep_{k}, \zeta_k  \ \ \ \mbox{for }  2\leq k \leq 2g+1$$ 
with $$|\ep_k|=0 , \ |\zeta_k|=1 \ \mbox{ and } \ \widehat{\d} \ep_{k} = \zeta_{k} , \ \widehat{\d} \zeta_{k} = 0$$
to those of $C'$.
and extend the algebra structure to $\widehat{C}$ by adding the following nontrivial products
\begin{align*}
\ep_{2j}\psi_j = \xi_1 + \cdots + \xi_{4j-2}, \ & \ \ep_{2j+1}\varphi_j = \xi_1 + \cdots + \xi_{4j-1} \\
\zeta_{2j}\psi_j = \nu_1 + \cdots + \nu_{4j-2}, \ & \ \zeta_{2j+1}\varphi_j = \nu_1 + \cdots + \nu_{4j-1}
\end{align*}
for $j=1, \dots , g$.

Finally, it is clear that the map $\widehat{\Phi} : H \to \widehat{C}$ defined on the cohomology algebra $H=H^*(S; \K)$ by 
$$\widehat{\Phi} : \ \ e \mapsto e , \ \ \overline{\varphi}_j \mapsto \varphi_j + \zeta_{2j} , \ \ \overline{\psi}_j \mapsto \psi_j + \zeta_{2j+1} , \ \ \nu \mapsto \nu$$
is a dg-algebra quasi-isomorphism
proving the formality of $C=C^*(S; \K)$.
\end{proof}

\end{document}